    \newtheorem{theorem}{Theorem}[section]
\newtheorem{proposition}{Proposition}[section]
\theoremstyle{definition}
\newtheorem{definition}{Definition}[section]
\newtheorem{example}{Example}[section]
\theoremstyle{remark}
\newtheorem{remark}{Remark}[section]
\numberwithin{equation}{section}
\begin{document}
\setcounter{page}{1}

\noindent 
\begin{center}
   {\textbf{\Large On the Characterization of gH-partial derivatives and gH-Product for Interval-Valued Functions}}\\[0.3cm]
   {\bf Amir Suhail\textsuperscript{a}, Tauheed\textsuperscript{a} and Akhlad Iqbal\textsuperscript{a}\textsuperscript{*} }
\end{center}
\noindent
\textit{\textsuperscript{a}Department of Mathematics, Aligarh Muslim University, Aligarh-202002, Uttar Pradesh, India}
\let\thefootnote\relax\footnotetext{\textsuperscript{*}Corresponding author: Akhlad Iqbal.\\
Email addresses:amir.mathamu@gmail.com (Amir Suhail),\\
tauheedalicktd@gmail.com (Tauheed),\\
akhlad.mm@amu.ac.in (Akhlad Iqbal).}

\bigskip

\textbf{Abstract}: In this paper, we show by a counterexample that the \emph{gH-partial derivative} of interval-valued functions (IVFs) may exist even when the partial derivative of the end point functions do not. Next, we introduce the \emph{gH-partial derivative}  in terms of \emph{gH-derivative} and discuss its complete characterization. Furthermore, we introduce the \emph{gH-product} of a vector with an n-tuples of intervals and illustrate by a suitable example that our definition refines the definition existing in the literature. To illustrate and validate these definitions, we provide several non-trivial examples.\vspace{0.2cm}\\
\noindent
{\bf{Mathematics Subject Classification:}} 26B05, 03E72, 65G30.   \vspace{0.2 cm}\\
 \noindent
 {\bf{Keywords}}: {Hukuhara difference, gH-difference, gH -partial derivative, gH-gradient, gH-product.}\\

\section{\textbf{INTRODUCTION}}
Uncertainty and imprecision are inherent in many real-world systems, particularly in engineering, economics, and decision sciences. In recent decades, the study of interval-valued functions (IVFs) has gained significant attention due to their applicability in modeling uncertainty and imprecision in various scientific and engineering problems \cite{Doolittle,Izhar}. Traditional real-valued analysis often falls short in contexts where input data, parameters, or outputs are not precisely known, leading to the need for interval-based mathematical frameworks. The foundation for such an approach is the theory of interval arithmetic, where numbers are represented as intervals instead of exact values, allowing the accommodation of ambiguity in computation. \vspace{.2cm}\\
 A major challenge in working with IVFs is that classical operations like subtraction and differentiation do not naturally extend to the interval setting. For example, standard interval subtraction can yield non-degenerate intervals even when subtracting an interval from itself \cite{Moore1966,Moore1979}. To resolve this, the \emph{Hukuhara difference} \cite{Hukuhara} was introduced, but it only applies under restrictive conditions. Later, \emph{Markov}~\cite{Markov} introduced difference between two intervals in a different way,  and discusssed calculus for IVFs of a real variable. Afterwards, \emph{Stefanini and Bede}~\cite{Stefanini2009} proposed the \emph{generalized Hukuhara (gH) difference}, which provides the same framework for interval subtraction as \emph{Markov} \cite{Markov}. Based on this foundation, they also introduced the notion of \emph{gH-differentiability}, enabling the development of calculus for IVFs of a real variable. The notion of \emph{gH-differentiability} for IVFs has also been extended to \emph{fuzzy-valued functions}, see (\cite{StefaniniBede2013}, \cite{Chalco2016}, \cite{Stefanini2019}). \vspace{.2cm} \\
Several researchers (\cite{StefaniniBede2013}, \cite{Chalco2016}, \cite{Stefanini2009}, \cite{Stefanini2019}) have characterized the \emph{$gH$-differentiability} for IVFs by using \emph{endpoint differentiabilty}. However,  \emph{Dong Qiu}~\cite{D. Qiu} presented a complete characterization of the \emph{gH-differentiability} and shown that the \emph{gH-differentiability of IVFs is not equivalent to the end-point differentiability}. \textit{Stefanini and Arana} ~\cite{Stefanini2019} extended the $gH$-differentiability for several variables to encompass the total and directional $gH$-differentiability, including the partial gH-differentiability. Partial differentiability and gradient for IVF have been defined by Ghosh et al. \cite{Ghosh,Ghosh2019}. Later, \textit{Osuna}~\cite{Gomez2022,Costa2022} presented a new definition of \emph{gH-differentiability} for IVFs of several variables by introducing a \emph{quasilinear interval approximation}. More recently, \emph{Bhat et al.}~\cite{Bhat2024, Bhat2025a, Bhat2025b} extended these ideas to Riemannian manifolds, establishing optimality conditions and derivative structures for IVFs in geometrically complex spaces.\vspace{.2cm}\\
In recent decades, the theory of calculus for fuzzy functions and ordinary differential equations with fuzzy parameters has been extensively investigated, both from theoretical and numerical perspectives \cite{Alikhani2014,Alikhani2017}. 
Building on this, the study of applied problems involving uncertain data has motivated the formulation of fuzzy partial differential equations. Nevertheless, compared to the case of single-variable functions \cite{Stefanini2009}, relatively less progress has been made in the analysis of multivariable fuzzy functions and the corresponding partial differential equations with fuzzy data \cite{Allahviranloo2006,Allahviranloo2015}.\vspace{0.4cm}\\
As we know that the \emph{$gH$-partial derivatives} for IVFs and \emph{fuzzy-valued functions} are related to each other, see \cite{Stefanini2019}. However, the earlier literature does not give the complete characterization of \emph{$gH$-partial derivatives} for IVFs as well as for \emph{fuzzy-valued functions}. In this paper we have demonstrated the \emph{complete characterization of $gH$-partial derivatives  in terms of gH-derivatives for the IVFs}, which can be extended for the \emph{fuzzy-valued functions} as well. Furthermore, we have defined a product of a vector with n-tuples of intervals named as  \emph{gH-product}. Theoretical constructs are validated through a series of non-trivial illustrative examples. This work aims to advance the mathematical foundations of interval as well as fuzzy analysis and provide new tools for the analysis and optimization of uncertain systems.\vspace{.2cm}\\
\noindent
The paper is structured as follows: Section~2 outlines the required preliminaries and fundamental definitions. Section~3 is divided into two parts: the first addresses the \emph{gH-partial derivative}, 
while the second focuses on the \emph{gH-product} and its properties. 
Illustrative examples are included to support the theoretical findings. 
Concluding remarks and prospects for future work are provided at the end.
\\

  \section{\textbf{PRELIMINARIES}} 
Let \( \mathcal{I}(\mathbb{R}) \) represent the collection of all closed and bounded intervals in \( \mathbb{R} \).
For any interval \( \mathcal{K} \in \mathcal{I}(\mathbb{R}) \), it is defined as:

\[
\mathcal{K} = [k^L, k^U] \quad \text{where} \quad k^L, k^U \in \mathbb{R} \quad \text{and} \quad k^L \leq k^U.
\]

Given two intervals \( \mathcal{K}_1 = [k_1^L, k_1^U] \) and \( \mathcal{K}_2 = [k_2^L,k_2^U] \) in \( \mathcal{I}(\mathbb{R}) \), their sum is defined as:

\[
\mathcal{K}_1 + \mathcal{K}_2 = [k_1^L +k_2^L, k_1^U +k_2^U].
\]

The negation of \( \mathcal{K}_1 \) is given by:

\[
-\mathcal{K}_1 = [-k_1^U, -k_1^L].
\]

Consequently, the difference between \( \mathcal{K}_1 \) and \( \mathcal{K}_2 \) is expressed as:

\[
\mathcal{K}_1 - \mathcal{K}_2 = \mathcal{K}_1 + (-\mathcal{K}_2) = [k_1^L -k_2^U, k_1^U -k_2^L].
\]

Additionally, scalar multiplication of \( \mathcal{K} \) by a real number \( \mathit{p} \) is defined as:

\[
\mathit{p}\mathcal{K} = 
\begin{cases} 
[\mathit{p}k^L, \mathit{p}k^U] & \text{if } \mathit{p} \geq 0, \\
[\mathit{p}k^U, \mathit{p}k^L] & \text{if } \mathit{p} < 0.
\end{cases}
\]
\noindent
This summarizes the fundamental operations on intervals within the set \( \mathcal{I}(\mathbb{R}) \).\vspace{-0.2cm}\\

For a deeper understanding of interval analysis, the interested reader is encouraged to consult the foundational works by Moore~\cite{Moore1966,Moore1979}, as well as the comprehensive treatment provided by Alefeld and Herzberger~\cite{AlefeldHerzberger}.

The Hausdorff distance between two intervals \( \mathcal{K}_1= [k_1^L, k_1^U] \) and \(\mathcal{K}_2 = [k_2^L,k_2^U] \) is defined as:

\[
d_H(\mathcal{K}_1, \mathcal{K}_2) = \max\{|k_1^L -k_2^L|, |k_1^U -k_2^U|\}.
\]

A limitation of  standard interval subtraction is that, for any interval \(\mathcal{K} \in \mathcal{ I(\mathbb{R})} \), the result of \( \mathcal{K -K}\) is not equal to zero. For instance, if \( \mathcal{K}= [0, 1] \), then:

\[
\mathcal{K - K} = [0, 1] - [0, 1] = [-1, 1] \neq 0.
\]

To resolve this issue, the \textit{Hukuhara difference} between two intervals \( \mathcal{K}_1 = [k_1^L, k_1^U] \) and \(\mathcal{K}_2= [k_2^L,k_2^U] \) is introduced as:

$$\mathcal{K}_1 \ominus \mathcal{K}_2 = [k_1^L -k_2^L, k_1^U -k_2^U].$$

With this definition, for any interval \( \mathcal{K} \in \mathcal{ I(\mathbb{R})} \), \(\mathcal{K \ominus K}= 0 \). However, the Hukuhara difference is not always valid for arbitrary intervals. For example, \( [0, 4] \ominus [0, 10] = [0, -6] \), which is not an interval since the lower bound exceeds the upper bound. This highlights a restriction in the applicability of the Hukuhara difference.

To overcome this limitation, Stefanini et al. \cite{Stefanini2009} proposed the \textit{generalized Hukuhara difference} (gH-difference) for \( \mathcal{K}_1 \) and \(\mathcal{K}_2 \), which is defined as:

\[
\mathcal{K}_1 \ominus_{gH} \mathcal{K}_2= \mathcal{K}_3 \iff 
\begin{cases} 
    (i) & \mathcal{K}_1 =\mathcal{K}_2 + \mathcal{K}_3, \quad \text{or} \\ 
    (ii) & \mathcal{K}_2 = \mathcal{K}_1 - \mathcal{K}_3. 
\end{cases}
\]

In case \((i)\), the gH-difference is equivalent to the Hukuhara difference (H-difference).

 For any two intervals \( \mathcal{K}_1 = [k_1^L, k_1^U] \) and \(\mathcal{K}_2 = [k_2^L,k_2^U] \), the gH-difference \( \mathcal{K}_1 \ominus_{gH}\mathcal{K}_2 \) always exists and is uniquely determined. Moreover, the following properties hold:

\begin{equation*}
    \begin{aligned}
        \mathcal{K}_1 \ominus_{gH} \mathcal{K}_1 = [0, 0]
    \end{aligned}
\end{equation*}  
\begin{equation*}
    \begin{aligned}
        \mathcal{K}_1 \ominus_{gH} \mathcal{K}_2 = \left[\min\{k_1^L -k_2^L, k_1^U -k_2^U\}, \max\{k_1^L -k_2^L, k_1^U -k_2^U\}\right].
    \end{aligned}
\end{equation*}

This generalized approach ensures that the difference between intervals is always well-defined and resolves the issues associated with the standard Hukuhara difference.

\begin{definition} \cite{Wu2007}.A map \( \tilde{h}: \mathbb{R}^n \to \mathcal{I}(\mathbb{R}) \) is an IVF if, for each  \( \mathit{x}  \in \mathbb{R}^n \),

\[
\tilde{h}(\mathit{x} ) = \left[\tilde{h}^L(\mathit{x} ), \tilde{h}^U(\mathit{x} )\right],
\]

where \( \tilde{h}^L, \tilde{h}^U : \mathbb{R}^n \rightarrow \mathbb{R} \) are real-valued functions such that \( \tilde{h}^L(\mathit{x} ) \leq \tilde{h}^U(\mathit{x} ) \), $\forall$ \( \mathit{x}  \in \mathbb{R}^n \).
\end{definition} 

 Building on this formal structure, Wu \cite{Wu2007} introduced a rigorous extension of classical calculus into the interval domain: the notions of continuity, limit, and two distinct forms of differentiability for interval-valued mappings. Next, we give a definition and a result that will be used in building our main results.

\vspace{0.2cm}

\begin{definition}\cite{Stefanini2009}
Let $\mathit{x} _0 \in (a, b)$.Then  IVF $\tilde{h} : (a, b) \to \mathcal{I}(\mathbb{R})$ is said to be gH-differentiable at $\mathit{x} _0$  if 

\begin{equation*}
    \tilde{h}'(\mathit{x} _0) = \lim_{t \to 0} \frac{\tilde{h}(\mathit{x} _0 + t) \ominus_{gH}\tilde{h}(\mathit{x} _0)}{t}
\end{equation*}
exists and $\tilde{h}'(\mathit{x} _0)$
is called gH-derivative of $\tilde{h}~at~ \mathit{x}_0$
\end{definition}

\begin{proposition}\label{prop2.1}{\cite{Stefanini2009}}
Given $\mathcal{K}_1, \mathcal{K}_2 \in \mathcal{I}(\mathbb{R})$ and $\nu \in \mathbb{R}$, it follows that

     $\nu \cdot \left(\mathcal{K}_1 \ominus_{gH} \mathcal{K}_2\right) 
    = \nu  \cdot \mathcal{K}_1 \,\ominus_{gH}\, \nu \cdot \mathcal{K}_2$

\end{proposition}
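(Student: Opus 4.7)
The plan is to prove the identity by a direct case analysis on the sign of $\nu$, using the closed-form expression for the gH-difference already recorded in the excerpt, namely
\[
\mathcal{K}_1 \ominus_{gH} \mathcal{K}_2 = \bigl[\min\{k_1^L-k_2^L,\,k_1^U-k_2^U\},\ \max\{k_1^L-k_2^L,\,k_1^U-k_2^U\}\bigr],
\]
together with the piecewise definition of scalar multiplication $\nu\cdot\mathcal{K}$. The case $\nu=0$ is trivial since both sides collapse to $[0,0]$, so the real work is in the two cases $\nu>0$ and $\nu<0$.

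For $\nu>0$, I would apply the formula above and then multiply through by $\nu$. Since $\nu\cdot[a,b]=[\nu a,\nu b]$ and since multiplying by a positive scalar preserves both $\min$ and $\max$, the left-hand side becomes
\[
\bigl[\min\{\nu(k_1^L-k_2^L),\,\nu(k_1^U-k_2^U)\},\ \max\{\nu(k_1^L-k_2^L),\,\nu(k_1^U-k_2^U)\}\bigr].
\]
On the right-hand side, $\nu\mathcal{K}_i=[\nu k_i^L,\nu k_i^U]$, so applying the same gH-difference formula to $\nu\mathcal{K}_1$ and $\nu\mathcal{K}_2$ yields exactly the same expression, and the two sides coincide.

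The main obstacle, and the only genuinely delicate step, is the case $\nu<0$, where the order of endpoints reverses under scalar multiplication and where $\min$ and $\max$ swap under multiplication by a negative number. Here I would use $\nu\cdot[a,b]=[\nu b,\nu a]$ together with the elementary identities $\nu\max\{a,b\}=\min\{\nu a,\nu b\}$ and $\nu\min\{a,b\}=\max\{\nu a,\nu b\}$ valid for $\nu<0$. Applied to the left-hand side, these turn
\[
\nu\cdot\bigl[\min\{k_1^L-k_2^L,\,k_1^U-k_2^U\},\ \max\{k_1^L-k_2^L,\,k_1^U-k_2^U\}\bigr]
\]
into the same symmetric $[\min\{\nu(k_1^L-k_2^L),\nu(k_1^U-k_2^U)\},\ \max\{\ldots\}]$ as before. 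For the right-hand side, $\nu\mathcal{K}_1=[\nu k_1^U,\nu k_1^L]$ and $\nu\mathcal{K}_2=[\nu k_2^U,\nu k_2^L]$, and plugging these into the gH-difference formula produces $[\min\{\nu(k_1^U-k_2^U),\nu(k_1^L-k_2^L)\},\ \max\{\ldots\}]$, which is identical to the left-hand side. Combining the three cases finishes the proof. The whole argument is essentially bookkeeping with $\min/\max$ under sign changes, so I would present it compactly by noting up front that the closed-form for $\mathcal{K}_1\ominus_{gH}\mathcal{K}_2$ is invariant under permuting the two entries of each $\min$/$\max$, which is what makes the negative-$\nu$ case work cleanly.
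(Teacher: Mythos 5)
Your proof is correct. Note, however, that the paper itself gives no proof of this proposition at all: it is stated in the preliminaries as a known result imported verbatim from Stefanini and Bede \cite{Stefanini2009}, so there is no in-paper argument to compare against. Your direct verification — the trivial case $\nu=0$, the order-preserving case $\nu>0$, and the delicate case $\nu<0$ handled via the identities $\nu\min\{a,b\}=\max\{\nu a,\nu b\}$ and $\nu\max\{a,b\}=\min\{\nu a,\nu b\}$ together with the endpoint reversal $\nu\cdot[c,d]=[\nu d,\nu c]$ — is exactly the standard bookkeeping one would expect, and the key observation that the closed-form $\bigl[\min\{k_1^L-k_2^L,\,k_1^U-k_2^U\},\ \max\{k_1^L-k_2^L,\,k_1^U-k_2^U\}\bigr]$ is symmetric in the two arguments of the $\min$/$\max$ is precisely what makes the negative case close cleanly. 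The argument is complete and self-contained.
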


\section{ \textbf{MAIN RESULTS }}
 \noindent
In this section, we define the \emph gH-partial derivatives of an IVF by using  the  definitions of gH-differentiability of an IVF. Furthermore,we introduce the $gH$-product of a vector with n -tuples of intervals, providing a compatibility of a vector with n-tuples of intervals. These foundational definitions are essential for the theoretical development and subsequent analysis for future work.
\begin{center}
   \item \subsection{\textit{gH- partial derivatives and gH-Gradient for IVFs}}
\end{center}
 The notion of the gH-gradient for IVFs using partial derivatives is defined as follows: \\
  Let  $\tilde{h} : \mathcal{S} \overset{\text{open}}{\subseteq} \mathbb{R}^n \rightarrow \mathcal{I}(\mathbb{R})$.
  Let $\mathit{x}  = (\mathit{x} _1, \mathit{x} _2, \dots, \mathit{x} _n)$ $\in \mathbb{R}^n$ and $t\in \mathbb{R}$. Then we have,
  \begin{equation*} 
  	\frac{\partial \tilde{h}}{\partial \mathit{x}_i} = \lim_{t \to 0} \frac{\tilde{h}(\mathit{x}_1, \mathit{x}_2, \dots, \mathit{x}_i + t, \dots, \mathit{x}_n) \ominus_{gH} \tilde{h}(\mathit{x}_1, \mathit{x}_2, \dots, \mathit{x}_i, \dots, \mathit{x}_n)}{t}.
  \end{equation*}\\
    
  \noindent
   The gH-gradient of $\tilde{h}(\mathit{x} )$ is represented as below.
  \begin{equation*}
  	\nabla_{gH} \tilde{h}(\mathit{x} ) = \left(\frac{\partial \tilde{h}}{\partial \mathit{x}_1}, \frac{\partial \tilde{h}}{\partial \mathit{x}_2} , \ldots, \frac{\partial \tilde{h}}{\partial \mathit{x}_n} \right)^T.
  \end{equation*}
  
\noindent
Stefanini et al. \cite{Stefanini2019} and Ghosh et al. \cite{Ghosh, Ghosh2019} defined the $gH$-partial derivative of $\tilde{h}$ as follows:

 \vspace{0.2cm}
\noindent

\begin{definition}\cite{Ghosh}
For the function $\Phi:\mathbb{R}\rightarrow\mathcal{I}(\mathbb{R})$ defined as:
\begin{equation*}
    \Phi(\mathit{\zeta} ) =\tilde{h}(\mathit{x} _1, \mathit{x} _2, \dots, \mathit{x}_{i-1}, \mathit{\zeta}, \mathit{x}_{i+1}, \dots, \mathit{x} _n).
\end{equation*}

If $\Phi$ is $gH$-differentiable at $\mathit{x} _i $, that is,
\begin{equation*}
    \lim_{t \to 0} \frac{\Phi(\mathit{x} _i + t) \ominus_{gH} \Phi(\mathit{x} _i)}{t}=\frac{d\Phi}{d\mathit{\zeta}}|_{\mathit{\zeta}=\mathit{x}_i}=\Phi^{'}(\mathit{x}_i) ,
\end{equation*}
exists, then $\tilde{h} $ is said to have the gH-partial derivative \textit{w.r.t.} $\mathit{x} _i$ and can be obtained by: 
 \begin{align*}          
	\frac{\partial \tilde{h}}{\partial\mathit{x}_i}= \big[  \min\{\frac{\partial \tilde{h}^L}{\partial\mathit{x}_i}, \frac{\partial \tilde{h}^U}{\partial\mathit{x}_i} \},\max\{\frac{\partial \tilde{h}^L}{\partial\mathit{x}_i}, \frac{\partial \tilde{h}^U}{\partial\mathit{x}_i} \} \big].
\end{align*}
\end{definition}
 
 \vspace{0.2cm} 
 \noindent
 It is to be noted, this definition requires the existence of partial derivatives of the endpoint functions.
 However, the partial derivatives of $\tilde{h}$ may exist even when the partial derivatives of the endpoint functions do not (See Ex. 3.1).
 For that, motivated by D. Qiu~\cite{D. Qiu}, we introduce the gH-partial derivative in terms of gH-derivative of $\Phi$. 
 
 \vspace{0.2cm} 
 \noindent
  First, we define the right gH-partial derivative and left gH-partial derivative of $\tilde{h}$ as follows:
\begin{equation*}
    \begin{aligned}
        \frac{\partial \tilde{h}_+}{\partial \mathit{x}_i} &= \textit{right gH-partial derivative of }~ \tilde{h} \textit{ w.r.t. } \mathit{x}_i\\
        &=\textit{right gH-derivative of} ~ \Phi \textit{ at }~\mathit{\zeta} = \mathit{x}_i
    \end{aligned}
\end{equation*}

\begin{equation*}
    \begin{aligned}
        \frac{\partial \tilde{h}_-}{\partial \mathit{x}_i} &= \textit{left gH-partial derivative of }~ \tilde{h} \textit{ w.r.t. } \mathit{x}_i\\
        &=\textit{left gH-derivative of} ~ \Phi \textit{ at }~\mathit{\zeta} = \mathit{x}_i
    \end{aligned}
\end{equation*}
\\
\noindent
    Let $f : \mathrm{X} \overset{\text{open}}{\subseteq} \mathbb{R} \to \mathbb{R}$ be a real-valued function and $\mathit{x}\in \mathrm{X}$. Define the function $\gamma_f : \mathbb{R} \setminus \{0\} \to \mathbb{R}$ by
\[
\gamma_f(t) = \frac{f(\mathit{x} + t) - f(\mathit{x})}{t},
\] 
where $t$ satisfies $\mathit{x} + t \in \mathrm{X}$.

\begin{definition}\cite{D. Qiu}
Consider two real-valued functions $g_{1}$ and $g_{2}$ defined on $(x_{0},\,x_{0}+\delta)$. 
We say that $g_{1}$ and $g_{2}$ are \emph{right complementary} at $x_{0}$ if the following conditions hold:
\begin{enumerate}[label=(\roman*)] 
   \item The set of cluster points of $g_1$ and $g_2$ on the right of $x_0$ i.e. $C_{R(x_{0})}(g_{1})$ and  $C_{R(x_{0})}(g_{2}),$ satisfy,
    \[C_{R(x_{0})}(g_{1}) = C_{R(x_{0})}(g_{2}) = \{\mathit{k}^L, \mathit{k}^U\},\]
    where $\mathit{k}^L,\mathit{k}^U \in \mathbb{R}$ with $\mathit{k}^L < \mathit{k}^U$.\\
    \item $\lim_{t \to 0^{+}} \min \{ g_{1}(x_{0}+t),\, g_{2}(x_{0}+t) \} = \mathit{k}^L, and\\
           \lim_{t \to 0^{+}} \max \{ g_{1}(x_{0}+t),\, g_{2}(x_{0}+t) \} = \mathit{k}^U.$
    \end{enumerate}
Analogously, we can define  left complementary at $x_0$.
\end{definition}

\vspace{0.2cm} 
\noindent
Now, we demonstrate the complete characterization of the $gH$-partial derivative of IVF $\tilde{h}$ in terms of their endpoint functions.

\begin{theorem} \label{theorem 3.1 p3}
  Let  $\tilde{h} : \mathcal{S} \overset{\text{open}}{\subseteq} \mathbb{R}^n \rightarrow \mathcal{I}(\mathbb{R})$. Also, suppose that $\Phi : \mathbb{R} \to \mathcal{I}(\mathbb{R})$ defined as \begin{equation*}
    \Phi(\mathit{\zeta} ) =\tilde{h}(\mathit{x} _1, \mathit{x} _2, \dots, \mathit{x}_{i-1}, \mathit{\zeta} , \mathit{x}_{i+1}, \dots, \mathit{x} _n).
\end{equation*}
The $gH$-partial derivative of $\tilde{h}$ with respect to $\mathit{x} _i$ exist iff one of the following cases holds:   
\end{theorem}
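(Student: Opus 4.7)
The plan is to reduce the multivariate statement to the univariate gH-derivative characterization already developed by D.~Qiu for single-variable IVFs. By the construction preceding the theorem, the $gH$-partial derivative of $\tilde{h}$ with respect to $\mathit{x}_i$ is, by definition, the $gH$-derivative of the single-variable IVF $\Phi$ at $\mathit{\zeta}=\mathit{x}_i$. Hence once $\Phi$ is fixed, every claim about partial derivatives of $\tilde h$ is a claim about the ordinary gH-derivative of $\Phi$, and Qiu's machinery applies verbatim to $\Phi$ rather than to $\tilde h$ itself.

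The first step is to write out the gH-difference quotient explicitly. Using the closed-form expression for $\ominus_{gH}$ recalled in the preliminaries, for $t\neq 0$ with $\mathit{x}_i+t$ in the appropriate neighborhood,
\begin{equation*}
\frac{\Phi(\mathit{x}_i+t)\ominus_{gH}\Phi(\mathit{x}_i)}{t}
=\Bigl[\min\{\gamma_{\tilde h^L}(t),\gamma_{\tilde h^U}(t)\},\ \max\{\gamma_{\tilde h^L}(t),\gamma_{\tilde h^U}(t)\}\Bigr],
\end{equation*}
where $\gamma_{\tilde h^L}$ and $\gamma_{\tilde h^U}$ denote the real-valued difference quotients of the endpoint functions of $\Phi$ at $\mathit{x}_i$, multiplied by the appropriate sign coming from Proposition~\ref{prop2.1}. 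Existence of $\partial\tilde h/\partial\mathit{x}_i$ is therefore equivalent to the convergence, as $t\to 0$, of both the $\min$ and the $\max$ of $\{\gamma_{\tilde h^L}(t),\gamma_{\tilde h^U}(t)\}$.

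The second step is to split into one-sided limits and invoke the notion of right/left complementary functions. For the right gH-partial derivative to exist, I would show it is equivalent to $\gamma_{\tilde h^L}$ and $\gamma_{\tilde h^U}$ being right complementary at $0$ (in which case the gH-partial derivative is the interval $[\mathit{k}^L,\mathit{k}^U]$), or both having a common right limit, or one having a right limit with the other clustering complementarily around it. The same dichotomy holds on the left. The full existence of $\partial\tilde h/\partial\mathit{x}_i$ then amounts to the right and left gH-partial derivatives existing and being equal as intervals; this matches each of the cases the theorem is stated to enumerate. Sufficiency in each case is a direct computation once the complementary definition is unfolded; necessity follows by contrapositive, exhibiting two subsequences $t_n\to 0$ along which the $\min$ (or $\max$) of the difference quotients tends to different values, contradicting convergence.

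The main obstacle is less analytic than bookkeeping: one has to isolate the precise case list (ordinary endpoint differentiability, proper one-sided complementary behavior, and the mixed situation where one endpoint function is classically differentiable while the other oscillates in a complementary fashion) and verify that each listed case does produce an interval-valued limit, while no other pattern of behavior of $\gamma_{\tilde h^L},\gamma_{\tilde h^U}$ can. The structural ideas are entirely inherited from Qiu's univariate theorem applied to $\Phi$; the novelty lies in freezing the remaining coordinates to form $\Phi$ and in checking that the resulting characterization agrees, under the additional hypothesis of endpoint partial differentiability, with the classical formula from Ghosh's definition as a sanity check.
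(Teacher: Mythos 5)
Your proposal is correct and follows essentially the same route as the paper: the paper's entire proof consists of observing that the $gH$-partial derivative of $\tilde{h}$ with respect to $\mathit{x}_i$ is by definition the $gH$-derivative of the single-variable IVF $\Phi$, and then citing Theorem~2 of D.~Qiu~\cite{D. Qiu}. The additional detail you supply (unfolding the difference quotient into $\min$ and $\max$ of $\gamma_{\tilde h^L},\gamma_{\tilde h^U}$ and the case analysis via right/left complementary behavior) is simply the content of Qiu's theorem, which the paper invokes as a black box.
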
  
\noindent
\begin{itemize}
    \item[(i)] $(\Phi^{L}( \mathit{x}_i))_+'=\frac{\partial \tilde{h}^L_+}{\partial \mathit{x}_i} $, $(\Phi^{U}( \mathit{x}_i))_+'=\frac{\partial \tilde{h}^U_+}{\partial \mathit{x}_i}$, $(\Phi^{L}( \mathit{x}_i))_-'=\frac{\partial \tilde{h}^L_-}{\partial \mathit{x}_i}$ and $(\Phi^{U}( \mathit{x}_i))_-'=\frac{\partial \tilde{h}^U_-}{\partial \mathit{x}_i}$ exist, and
    \begin{equation*}
        \begin{aligned}
            \frac{\partial \tilde{h}}{\partial\mathit{x}_i} =\Phi'(\mathit{x}_i) &= \big[ \min\{(\Phi^{L}( \mathit{x}_i))_+', (\Phi^{U}( \mathit{x}_i))_+' \}, \max\{(\Phi^{L}( \mathit{x}_i))_+', (\Phi^{U}( \mathit{x}_i))_+' \} \big]\\
    &= \big[ \min\{(\Phi^{L}( \mathit{x}_i))_-', (\Phi^{U}( \mathit{x}_i))_-' \}, \max\{(\Phi^{L}( \mathit{x}_i))_-', (\Phi^{U}( \mathit{x}_i))_-' \} \big].
        \end{aligned}
    \end{equation*}

    \item[(ii)] $(\Phi^{L}( \mathit{x}_i))_+'$ and $(\Phi^{U}( \mathit{x}_i))_+'$  exist. $\gamma_{\Phi^{L}}(t)$ and $\gamma_{\Phi^{U}}(t)$ are left complementary at 0, i.e., $C_{L(0)}(\gamma_{\Phi^{L}}) = C_{L(0)}(\gamma_{\Phi^{U}}) = \{ \mathit{k}^L, \mathit{k}^U \}$, where $\mathit{k}^L, \mathit{k}^U \in \mathbb{R}$ and $\mathit{k}^L < \mathit{k}^U$. Moreover,
    
   \begin{equation*}
   	  \begin{aligned}
\frac{\partial \tilde{h}}{\partial\mathit{x}_i} =\Phi'(\mathit{x}_i) &= \big[ \min\{(\Phi^{L}( \mathit{x}_i))_+', (\Phi^{U}( \mathit{x}_i))_+' \}, \max\{(\Phi^{L}( \mathit{x}_i))_+', (\Phi^{U}( \mathit{x}_i))_+' \} \big] \\
          &= [\mathit{k}^L, \mathit{k}^U]. 
            \end{aligned}    
\end{equation*}

    \item[(iii)] $(\Phi^{L}( \mathit{x}_i))_-'$ and $(\Phi^{U}( \mathit{x}_i))_-'$  exist. $\gamma_{\Phi^{L}}(t)$ and $\gamma_{\Phi^{U}}(t)$ are right complementary at 0, i.e., $C_{R(0)}(\gamma_{\Phi^{L}}) = C_{R(0)}(\gamma_{\Phi^{U}}) = \{ \mathit{k}^L, \mathit{k}^U \}$, where $\mathit{k}^L, \mathit{k}^U \in \mathbb{R}$ and $\mathit{k}^L < \mathit{k}^U$. Moreover,
    \begin{equation*}
        \begin{aligned}
          \frac{\partial \tilde{h}}{\partial\mathit{x}_i} =  \Phi'(\mathit{x}_i) &= \big[ \min\{(\Phi^{L}( \mathit{x}_i))_-', (\Phi^{U}( \mathit{x}_i))_-' \}, \max\{(\Phi^{L}( \mathit{x}_i))_-', (\Phi^{U}( \mathit{x}_i))_-' \} \big]\\
          & = [\mathit{k}^L, \mathit{k}^U].
        \end{aligned}
    \end{equation*}

    \item[(iv)] $\gamma_{\Phi^{L}}(t)$ and $\gamma_{\Phi^{U}}(t)$ are both left complementary and right complementary at 0, i.e., $C_{R(0)}(\gamma_{\Phi^{L}}) = C_{R(0)}(\gamma_{\Phi^{U}}) =C_{L(0)}(\gamma_{\Phi^{L}}) = C_{L(0)}(\gamma_{\Phi^{U}})= \{ \mathit{k}^L, \mathit{k}^U \}$, where $\mathit{k}^L, \mathit{k}^U \in \mathbb{R}$ and $\mathit{k}^L < \mathit{k}^U$. Moreover,
    \[
 \frac{\partial \tilde{h}}{\partial\mathit{x}_i} =  \Phi'(\mathit{x}_i) = [\mathit{k}^L, \mathit{k}^U].
    \]
\end{itemize}
\begin{proof}
    Since, the IVF $\Phi:\mathbb{R}\rightarrow\mathcal{I}(\mathbb{R})$ is defined as $
    \Phi(\mathit{\zeta} ) =\tilde{h}(\mathit{x} _1, \mathit{x} _2, \dots, \mathit{x}_{i-1}, \mathit{\zeta}, \mathit{x}_{i+1}, \dots, \mathit{x} _n).$
    The proof follows from (Theorem 2 in D. Qiu~\cite{D. Qiu}.)

\end{proof}

\begin{example} 
    Let $ \tilde{h} : \mathbb{R}^2 \to \mathcal{I}(\mathbb{R})$ defined by
\begin{equation*}
    \tilde{h}(\mathit{x},\mathit{y}) = [-|x| + y^2,\; |x| + y^2].
\end{equation*}
Then,
\begin{equation*}
    \tilde{h}^L(\mathit{x},\mathit{y}) = -|x| + y^2, \quad \tilde{h}^U(\mathit{x},\mathit{y}) = |x| + y^2
\end{equation*}
\noindent
Now, we compute the following:\vspace{0.2cm}\\
\noindent
\begin{equation*}
\left( \frac{\partial \tilde{h}^L}{\partial x} \right)_+(0, 0) 
= \lim_{t \to 0^+} \frac{\tilde{h}^L(0 + t, 0) - \tilde{h}^L(0, 0)}{t}
= \lim_{t \to 0^+} \frac{-|t| - 0}{t} = -1
\end{equation*}\vspace{0.2cm}
\noindent
\begin{equation*}
\left( \frac{\partial \tilde{h}^L}{\partial x} \right)_-(0, 0) 
= \lim_{t \to 0^-} \frac{\tilde{h}^L(t, 0) - \tilde{h}^L(0, 0)}{t}
= \lim_{t \to 0^-} \frac{-|t| - 0}{t} = 1\qquad
\end{equation*}
\noindent
\vspace{0.2cm}
Similarly we have, \vspace{0.2cm}
$\left( \frac{\partial \tilde{h}^U}{\partial x} \right)_+(0, 0) 
=1$ and $\left( \frac{\partial \tilde{h}^U}{\partial x} \right)_-(0, 0) 
=-1$

\vspace{0.3cm}
Thus, it is evident that the partial derivatives of the endpoint functions with respect to \(x\) at \((0,0)\) does not exist. Nevertheless,

\begin{equation*}
    \begin{aligned}
        &\left[
    \min \left\{
        \left( \frac{\partial \tilde{h}^L}{\partial x} \right)_{+}(0, 0),\,
        \left( \frac{\partial \tilde{h}^U}{\partial x} \right)_{+}(0, 0)
    \right\}, \right. \\
&\quad \left.
    \max \left\{
        \left( \frac{\partial \tilde{h}^L}{\partial x} \right)_{+}(0, 0),\,
        \left( \frac{\partial \tilde{h}^U}{\partial x} \right)_{+}(0, 0)
    \right\}
\right]
= [-1, 1]
    \end{aligned}
\end{equation*}

\begin{equation*}
    \begin{aligned}
        &\left[
    \min \left\{
        \left( \frac{\partial \tilde{h}^L}{\partial x} \right)_{-}(0, 0),\,
        \left( \frac{\partial \tilde{h}^U}{\partial x} \right)_{-}(0, 0)
    \right\}, \right. \\
&\quad \left.
    \max \left\{
        \left( \frac{\partial \tilde{h}^L}{\partial x} \right)_{-}(0, 0),\,
        \left( \frac{\partial \tilde{h}^U}{\partial x} \right)_{-}(0, 0)
    \right\}
\right]
= [-1, 1]
    \end{aligned}
\end{equation*}\vspace{0.2cm}\\
Therefore, by Theorem 3.1. case (i), $\frac{\partial \tilde{h}}{\partial x}(0, 0)$ exist and $\frac{\partial \tilde{h}}{\partial x}(0, 0)=[-1,1]$.\\
\end{example}
\begin{proposition}
     Suppose $\frac{\partial \tilde{h}^L}{\partial\mathit{x}_i}~ \textit{and}~\frac{\partial \tilde{h}^U}{\partial\mathit{x}_i}$ exist. Then, the gH-partial derivatives of $\tilde{h}$ exist and 
                        
             \begin{align*}          
                        \frac{\partial \tilde{h}}{\partial\mathit{x}_i}=\Phi'(\mathit{x}_i) = \big[  \min\{\frac{\partial \tilde{h}^L}{\partial\mathit{x}_i}, \frac{\partial \tilde{h}^U}{\partial\mathit{x}_i} \},\max\{\frac{\partial \tilde{h}^L}{\partial\mathit{x}_i}, \frac{\partial \tilde{h}^U}{\partial\mathit{x}_i} \} \big].
        \end{align*}

\end{proposition}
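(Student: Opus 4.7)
The plan is to derive this proposition as a direct corollary of Theorem \ref{theorem 3.1 p3}, case (i). Since the hypothesis furnishes two-sided ordinary partial derivatives of the endpoint functions, the four one-sided derivatives required by case (i) collapse to two, and the announced formula drops out of that theorem. There is essentially no obstacle here; the argument is a verification exercise.

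First I would set up the reduction. Fix the point $\mathit{x} = (\mathit{x}_1,\ldots,\mathit{x}_n)$ and, as in the theorem, define $\Phi(\mathit{\zeta}) = \tilde{h}(\mathit{x}_1,\ldots,\mathit{x}_{i-1},\mathit{\zeta},\mathit{x}_{i+1},\ldots,\mathit{x}_n)$. Then $\Phi^L$ and $\Phi^U$ are univariate real-valued functions whose derivatives at $\mathit{\zeta}=\mathit{x}_i$ coincide, by construction, with the partial derivatives $\frac{\partial \tilde{h}^L}{\partial \mathit{x}_i}$ and $\frac{\partial \tilde{h}^U}{\partial \mathit{x}_i}$ at $\mathit{x}$. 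Thus the hypothesis of the proposition is precisely the statement that $(\Phi^{L})'(\mathit{x}_i)$ and $(\Phi^{U})'(\mathit{x}_i)$ exist as ordinary two-sided derivatives.

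Next I would invoke Theorem \ref{theorem 3.1 p3}. An ordinary derivative exists if and only if the left and right one-sided derivatives exist and agree, so
\[
(\Phi^{L}(\mathit{x}_i))_+' \;=\; (\Phi^{L}(\mathit{x}_i))_-' \;=\; \frac{\partial \tilde{h}^L}{\partial \mathit{x}_i}, \qquad (\Phi^{U}(\mathit{x}_i))_+' \;=\; (\Phi^{U}(\mathit{x}_i))_-' \;=\; \frac{\partial \tilde{h}^U}{\partial \mathit{x}_i}.
\]
This is exactly the hypothesis of case (i) of Theorem \ref{theorem 3.1 p3}, and the two candidate intervals produced there (one from the right-derivatives, one from the left-derivatives) automatically coincide, both reducing to $\bigl[\min\{\tfrac{\partial \tilde{h}^L}{\partial \mathit{x}_i},\tfrac{\partial \tilde{h}^U}{\partial \mathit{x}_i}\},\max\{\tfrac{\partial \tilde{h}^L}{\partial \mathit{x}_i},\tfrac{\partial \tilde{h}^U}{\partial \mathit{x}_i}\}\bigr]$. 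Reading off the conclusion of case (i) delivers the asserted formula.

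If one preferred a self-contained derivation bypassing Theorem \ref{theorem 3.1 p3}, the route would be to establish the identity
\[
\frac{\Phi(\mathit{x}_i+t)\;\ominus_{gH}\;\Phi(\mathit{x}_i)}{t} \;=\; \Bigl[\min\{\gamma_{\Phi^L}(t),\,\gamma_{\Phi^U}(t)\},\;\max\{\gamma_{\Phi^L}(t),\,\gamma_{\Phi^U}(t)\}\Bigr]
\]
for every $t\ne 0$, using the endpoint formula for $\ominus_{gH}$ together with Proposition \ref{prop2.1}, and then to pass to the limit $t\to 0$ using continuity of $\min$ and $\max$. The only (minor) subtlety in that alternative is that for $t<0$ the scalar $\tfrac{1}{t}$ swaps the endpoints of $\tfrac{1}{t}\Phi$; but the $\min/\max$ pairing on the right-hand side is symmetric in its two arguments and absorbs the swap, so the identity is unaffected.
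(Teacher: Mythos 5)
Your proposal is correct, and it follows the route the paper itself intends: the paper states this proposition without proof, positioning it as an immediate corollary of Theorem~\ref{theorem 3.1 p3}, and your reduction --- two-sided existence of $\frac{\partial \tilde{h}^L}{\partial\mathit{x}_i}$ and $\frac{\partial \tilde{h}^U}{\partial\mathit{x}_i}$ forces $(\Phi^{L}(\mathit{x}_i))_+'=(\Phi^{L}(\mathit{x}_i))_-'$ and $(\Phi^{U}(\mathit{x}_i))_+'=(\Phi^{U}(\mathit{x}_i))_-'$, so the two candidate intervals in case~(i) coincide and the formula drops out --- is exactly that argument, carried out explicitly. Your alternative self-contained derivation via the identity for the difference quotient and continuity of $\min$ and $\max$ is also sound and would serve equally well.
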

\noindent
Further, we demonstrate Theorem~\ref{theorem 3.1 p3} through the following examples.
\begin{example}
    Let \(  \tilde{h}: \mathbb{R}^2 \to \mathcal{I}(\mathbb{R}) \) be defined as
\begin{equation*}
     \tilde{h}(x, y) =\left[|x| + \sin x + y^2,|x| + \sin x + (x - 4)^2 + y^2\right]
\end{equation*}
Then, \(  \tilde{h}^L,  \tilde{h}^U : \mathbb{R}^2 \to \mathbb{R} \), the lower and upper end functions of \( \tilde{h}(\mathit{x},\mathit{y}) \) are respectively, 
\[
 \tilde{h}^L(x, y) = |x| + \sin x + y^2, \quad
 \tilde{h}^U(x, y) = |x| + \sin x + (x - 4)^2 + y^2.
\]
We now compute the following,
\begin{align*}
             \left( \frac{\partial  \tilde{h}^L}{\partial x} \right)_+(0, 0) 
&= \lim_{t \to 0^+} \frac{ \tilde{h}^L(t, 0) -  \tilde{h}^L(0, 0)}{t}\\
&= \lim_{t \to 0^+} \frac{|t| + \sin t}{t}\\
&=2  
\end{align*}\vspace{-1cm} 
 \begin{align*}
       \left( \frac{\partial  \tilde{h}^L}{\partial x} \right)_-(0, 0) 
&= \lim_{t \to 0^-} \frac{ \tilde{h}^L(t, 0) -  \tilde{h}^L(0, 0)}{t}\\
&= \lim_{t \to 0^-} \frac{|t| + \sin t}{t}\\
&=0 \\
    \end{align*}\vspace{-1cm}
    
Thus, right and left partial derivatives of $\tilde{h}^L$  w.r.t $x$ exist at (0,0).Also,\vspace{.6cm}\vspace{-1.5cm}\\

    \begin{align*}
        \left( \frac{\partial  \tilde{h}^U}{\partial x} \right)_+(0, 0) 
&= \lim_{t \to 0^+} \frac{ \tilde{h}^L(t, 0) -  \tilde{h}^L(0, 0)}{t}\\
&= \lim_{t \to 0^+} \frac{|t| + \sin t+(t-4)^2-16}{t}\\
&=-6
    \end{align*}
    \begin{align*}
        \left( \frac{\partial  \tilde{h}^U}{\partial x} \right)_-(0, 0) 
&= \lim_{t \to 0^-} \frac{ \tilde{h}^L(t, 0) -  \tilde{h}^L(0, 0)}{t}\\
&= \lim_{t \to 0^-} \frac{|t| + \sin t+(t-4)^2-16}{t}\\
&=-8
    \end{align*}

Thus, right and left partial derivatives of $\tilde{h}^U$ exist at (0,0). But\\
\begin{equation*}
    \begin{aligned}
        &\left[
    \min \left\{
        \left( \frac{\partial \tilde{h}^L}{\partial x} \right)_{+}(0, 0),\,
        \left( \frac{\partial \tilde{h}^U}{\partial x} \right)_{+}(0, 0)
    \right\}, \right. \\
&\quad \left.
    \max \left\{
        \left( \frac{\partial \tilde{h}^L}{\partial x} \right)_{+}(0, 0),\,
        \left( \frac{\partial \tilde{h}^U}{\partial x} \right)_{+}(0, 0)
    \right\}
\right]
= [-6, 2]
    \end{aligned}
\end{equation*}
\begin{equation*}
    \begin{aligned}
        &\left[
    \min \left\{
        \left( \frac{\partial \tilde{h}^L}{\partial x} \right)_{-}(0, 0),\,
        \left( \frac{\partial \tilde{h}^U}{\partial x} \right)_{-}(0, 0)
    \right\}, \right. \\
&\quad \left.
    \max \left\{
        \left( \frac{\partial \tilde{h}^L}{\partial x} \right)_{-}(0, 0),\,
        \left( \frac{\partial \tilde{h}^U}{\partial x} \right)_{-}(0, 0)
    \right\}
\right]
= [-8, 0].
    \end{aligned}
\end{equation*}\\[0.5cm]
Therefore, by Theorem 3.1. case (i), $\frac{\partial \tilde{h}}{\partial x}$ does not exist at (0,0).\\
\end{example}
\begin{example}\label{example 3.2}
    Let \( \tilde{h} : \mathbb{R}^2 \to \mathcal{I}(\mathbb{R}) \) be defined as:
\[
\tilde{h}(\mathit{x},\mathit{y}) = 
\begin{cases}
[x, 2x + 1 + |y|] & \text{if } x > 0 \\
[0, 1] & \text{if } x = 0, y = 0 \\
[x, x^2 + 2x + 1] & \text{if } x < 0,\, x \in \mathbb{Q} \\
[2x, x^2 + x + 1] & \text{if } x < 0,\, x \in \mathbb{Q}^c
\end{cases}
\]

Then \( \tilde{h}^L, \tilde{h}^U : \mathbb{R}^2 \to \mathbb{R} \), be given as follows:

\begin{equation*}
    \begin{aligned}
        \tilde{h}^L(\mathit{x},\mathit{y}) = 
\begin{cases}
x & \text{if } x > 0 \\
0 & \text{if } x = 0, y = 0 \\
x & \text{if } x < 0,\, x \in \mathbb{Q} \\
2x & \text{if } x < 0,\, x \in \mathbb{Q}^c
\end{cases}
\qquad
\tilde{h}^U(\mathit{x},\mathit{y}) = 
\begin{cases}
2x + 1 + |y| & \text{if } x > 0 \\
1 & \text{if } x = 0, y = 0 \\
x^2 + 2x + 1 & \text{if } x < 0,\, x \in \mathbb{Q} \\
x^2 + x + 1 & \text{if } x < 0,\, x \in \mathbb{Q}^c
\end{cases}
    \end{aligned}
    \end{equation*}
Now, $\gamma_{\tilde{h}^L}(t) ~\text{and}~ \gamma_{\tilde{h}^U}(t)$ are:

\begin{equation*}
    \begin{aligned}
        \gamma_{\tilde{h}^L}(t) = \frac{\tilde{h}^L(x + t, y) - \tilde{h}^L(\mathit{x},\mathit{y})}{t}, \quad
\gamma_{\tilde{h}^U}(t) = \frac{\tilde{h}^U(x + t, y) - \tilde{h}^U(\mathit{x},\mathit{y})}{t}, \quad t \in \mathbb{R} \setminus \{0\}
    \end{aligned}
\end{equation*}

\begin{equation*}
    \begin{aligned}
        \left( \frac{\partial \tilde{h}^L}{\partial x} \right)_+(0, 0)
&= \lim_{t \to 0^+} \frac{\tilde{h}^L(t, 0) - \tilde{h}^L(0, 0)}{t}\\
&= \lim_{t \to 0^+} \frac{t - 0}{t}\\
&= 1\\
    \end{aligned}
\end{equation*}

\begin{equation*}
    \begin{aligned}
        \left( \frac{\partial \tilde{h}^U}{\partial x} \right)_+(0, 0)
&= \lim_{t \to 0^+} \frac{\tilde{h}^U(t, 0) - \tilde{h}^U(0, 0)}{t}\\
&= \lim_{t \to 0^+} \frac{2t+1 - 1}{t}\\
&= 2\\ 
    \end{aligned}
\end{equation*}
Thus, right partial derivatives of $\tilde{h}^L$ and $\tilde{h}^U$ exist at (0,0). Now,
\begin{equation*}
 \begin{aligned}
 \left( \frac{\partial \tilde{h}^L}{\partial x} \right)_{-} (0,0) &= \lim_{t \to 0^-} \frac{\tilde{h}^L(t,0) - \tilde{h}^L(0,0)}{t}\\
        &= \lim_{t \to 0^-}
\begin{cases}
\frac{t - 0}{t} , & \text{if } t \in \mathbb{Q} \\
\frac{2t - 0}{t} , & \text{if } t \in \mathbb{Q}^c
\end{cases}\\
&= \begin{cases}
1, & \text{if } t \in \mathbb{Q} \\
2, & \text{if } t \in \mathbb{Q}^c
\end{cases}\\
\left( \frac{\partial \tilde{h}^U}{\partial x} \right)_{-} (0,0) &= \lim_{t \to 0^-} \frac{\tilde{h}^U(t,0) - \tilde{h}^U(0,0)}{t}\\
        &= \lim_{t \to 0^-}
\begin{cases}
\frac{t^2 + 2t + 1 - 1}{t} = \frac{t^2 + 2t}{t}, & \text{if } t \in \mathbb{Q} \\
\frac{t^2 + t + 1 - 1}{t} = \frac{t^2 + t}{t}, & \text{if } t \in \mathbb{Q}^c
\end{cases}\\
&= \begin{cases}
2, & \text{if } t \in \mathbb{Q} \\
1, & \text{if } t \in \mathbb{Q}^c
\end{cases}
    \end{aligned}
\end{equation*}

Therefore,
\begin{equation*}
    C_{L(0)}(\gamma_{\tilde{h}^L}) = C_{L(0)}(\gamma_{\tilde{h}^U}) = \{1, 2\} = \{\mathit{k}^L, \mathit{k}^U\}, \quad \text{where } \mathit{k}^L \leq \mathit{k}^U
\end{equation*}
and
\[
\lim_{t \to 0^-} \min \left\{ \gamma_{\tilde{h}^L}(t), \gamma_{\tilde{h}^U}(t) \right\} = 1, \quad 
\lim_{t \to 0^-} \max \left\{ \gamma_{\tilde{h}^L}(t), \gamma_{\tilde{h}^U}(t) \right\} = 2.
\]
\\
\noindent Therefore, by definition 3.2, $\gamma_{\tilde{h}^L}(t) ~\&~ \gamma_{\tilde{h}^U}(t)$ are  \textbf{left complementary} at 0. Thus,\vspace{0.2cm}\\
\begin{equation*}
    \begin{aligned}
        &\left[
    \min \left\{
        \left( \frac{\partial \tilde{h}^L}{\partial x} \right)_{+}(0, 0),\,
        \left( \frac{\partial \tilde{h}^U}{\partial x} \right)_{+}(0, 0)
    \right\}, \right. \\
&\quad \left.
    \max \left\{
        \left( \frac{\partial \tilde{h}^L}{\partial x} \right)_{+}(0, 0),\,
        \left( \frac{\partial \tilde{h}^U}{\partial x} \right)_{+}(0, 0)
    \right\}
\right]
= [1, 2]=[\mathit{k}^L,\mathit{k}^U].
    \end{aligned}
\end{equation*}

Therefore, by Theorem 3.1. case (ii), $\frac{\partial \tilde{h}}{\partial x}(0, 0)$ exist and $\frac{\partial \tilde{h}}{\partial x}(0, 0)=[1,2]$.\\
\end{example}
\begin{example}
    Let \( \tilde{h} : \mathbb{R}^2 \to \mathcal{I}(\mathbb{R}) \) be defined as:
\[
\tilde{h}(\mathit{x},\mathit{y}) = 
\begin{cases}
[x, 2x  + |y|] & \text{if } x > 0 \\
[0, 0] & \text{if } x = 0, y = 0 \\
[x, x + |y| ] & \text{if } x < 0,\, x \in \mathbb{Q} \\
[2x, 2x +|y|] & \text{if } x < 0,\, x \in \mathbb{Q}^c
\end{cases}
\]

Then \( \tilde{h}^L, \tilde{h}^U : \mathbb{R}^2 \to \mathbb{R} \), be given as follows:

\begin{equation*}
    \begin{aligned}
        \tilde{h}^L(\mathit{x},\mathit{y}) = 
\begin{cases}
x & \text{if } x > 0 \\
0 & \text{if } x = 0, y = 0 \\
x & \text{if } x < 0,\, x \in \mathbb{Q} \\
2x & \text{if } x < 0,\, x \in \mathbb{Q}^c
\end{cases}
\qquad
\tilde{h}^U(\mathit{x},\mathit{y}) = 
\begin{cases}
2x  + |y| & \text{if } x > 0 \\
0 & \text{if } x = 0, y = 0 \\
x+|y| & \text{if } x < 0,\, x \in \mathbb{Q} \\
2x+|y| & \text{if } x < 0,\, x \in \mathbb{Q}^c
\end{cases}
    \end{aligned}
    \end{equation*}
Now, $\gamma_{\tilde{h}^L}(t) ~\text{and}~ \gamma_{\tilde{h}^U}(t)$ are:

\begin{equation*}
    \begin{aligned}
        \gamma_{\tilde{h}^L}(t) = \frac{\tilde{h}^L(x + t, y) - \tilde{h}^L(\mathit{x},\mathit{y})}{t}, \quad
\gamma_{\tilde{h}^U}(t) = \frac{\tilde{h}^U(x + t, y) - \tilde{h}^U(\mathit{x},\mathit{y})}{t}, \quad t \in \mathbb{R} \setminus \{0\}
    \end{aligned}
\end{equation*}

\begin{equation*}
    \begin{aligned}
        \left( \frac{\partial \tilde{h}^L}{\partial x} \right)_+(0, 0)
&= \lim_{t \to 0^+} \frac{\tilde{h}^L(t, 0) - \tilde{h}^L(0, 0)}{t}\\
&= \lim_{t \to 0^+} \frac{t - 0}{t}\\
&= 1\\
    \end{aligned}
\end{equation*}

\begin{equation*}
    \begin{aligned}
        \left( \frac{\partial \tilde{h}^U}{\partial x} \right)_+(0, 0)
&= \lim_{t \to 0^+} \frac{\tilde{h}^U(t, 0) - \tilde{h}^U(0, 0)}{t}\\
&= \lim_{t \to 0^+} \frac{2t - 0}{t}\\
&= 2\\
    \end{aligned}
\end{equation*}
Thus, right partial derivative of $\tilde{h}^L$ and $\tilde{h}^U$  exist at (0,0). Now,
\begin{equation*}
    \begin{aligned}
        \left( \frac{\partial \tilde{h}^L}{\partial x} \right)_{-} (0,0) &= \lim_{t \to 0^-} \frac{\tilde{h}^L(t,0) - \tilde{h}^L(0,0)}{t}\\
        &= \lim_{t \to 0^-}
\begin{cases}
\frac{t - 0}{t} , & \text{if } t \in \mathbb{Q} \\
\frac{2t - 0}{t} , & \text{if } t \in \mathbb{Q}^c
\end{cases}\\
&= \begin{cases}
1, & \text{if } t \in \mathbb{Q} \\
2, & \text{if } t \in \mathbb{Q}^c
\end{cases}
    \end{aligned}
\end{equation*}

\begin{equation*}
    \begin{aligned}
        \left( \frac{\partial \tilde{h}^U}{\partial x} \right)_{-} (0,0) &= \lim_{t \to 0^-} \frac{\tilde{h}^U(t,0) - \tilde{h}^U(0,0)}{t}\\
        &= \lim_{t \to 0^-}
\begin{cases}
\frac{t - 0}{t} , & \text{if } t \in \mathbb{Q} \\
\frac{2t-0}{t} , & \text{if } t \in \mathbb{Q}^c
\end{cases}\\
&= \begin{cases}
1, & \text{if } t \in \mathbb{Q} \\
2, & \text{if } t \in \mathbb{Q}^c
\end{cases}
    \end{aligned}
\end{equation*}

Therefore,
\begin{equation*}
    C_{L(0)}(\gamma_{\tilde{h}^L}) = C_{L(0)}(\gamma_{\tilde{h}^U}) = \{1, 2\} = \{\mathit{k}^L, \mathit{k}^U\}, \quad \text{where } \mathit{k}^L \leq \mathit{k}^U
\end{equation*}
and
\begin{equation*}
    \begin{aligned}
        \lim_{t \to 0^-} \min \left\{ \gamma_{\tilde{h}^L}(t), \gamma_{\tilde{h}^U}(t) \right\} = \begin{cases}
1, & \text{if } t \in \mathbb{Q} \\
2, & \text{if } t \in \mathbb{Q}^c
\end{cases},
    \end{aligned}
\end{equation*}
\begin{equation*}
    \begin{aligned}
        \lim_{t \to 0^-} \max \left\{ \gamma_{\tilde{h}^L}(t), \gamma_{\tilde{h}^U}(t) \right\}  = \begin{cases}
1, & \text{if } t \in \mathbb{Q} \\
2, & \text{if } t \in \mathbb{Q}^c
\end{cases}
    \end{aligned}
\end{equation*}\vspace{-0.2cm}\\

\noindent Therefore, by definition 3.2, $\gamma_{\tilde{h}^L}(t)~\&~ \gamma_{\tilde{h}^U}(t)$ are   \textbf{not left complementary} at 0. Thus, by Theorem 3.1. case(ii), $\left( \frac{\partial \tilde{h}}{\partial x} \right)_{(x,y)=(0,0)} $ does not exit. Nevertheless,\vspace{0.2cm}\\
\begin{equation*}
    \begin{aligned}
       & \left[
            \min \left\{ 
                \left( \frac{\partial \tilde{h}^L}{\partial x} \right)_+ (0,0),\,
                \left( \frac{\partial \tilde{h}^U}{\partial x} \right)_+ (0,0) 
            \right\}, \right. \\
        &\quad \left.
            \max \left\{ 
                \left( \frac{\partial \tilde{h}^L}{\partial x} \right)_+ (0,0),\,
                \left( \frac{\partial \tilde{h}^U}{\partial x} \right)_+ (0,0) 
            \right\}
        \right]\\
        &=[1,2]=[\mathit{k}^L, \mathit{k}^U]
    \end{aligned}
\end{equation*}
\end{example}\vspace{0.2cm}
\begin{example}
     Let \( \tilde{h} : \mathbb{R}^2 \to \mathcal{I}(\mathbb{R}) \) be defined as:
\[
\tilde{h}(\mathit{x},\mathit{y}) = 
\begin{cases}
[x, 2x + 1 + |y|] & \text{if } x < 0 \\
[0, 1] & \text{if } x = 0, y = 0 \\
[x, x^2 + 2x + 1] & \text{if } x > 0,\, x \in \mathbb{Q} \\
[2x, x^2 + x + 1] & \text{if } x > 0,\, x \in \mathbb{Q}^c
\end{cases}
\]

Then \( \tilde{h}^L, \tilde{h}^U : \mathbb{R}^2 \to \mathbb{R} \), be given as follows:

\begin{equation*}
    \begin{aligned}
        \tilde{h}^L(\mathit{x},\mathit{y}) = 
\begin{cases}
x & \text{if } x < 0 \\
0 & \text{if } x = 0, y = 0 \\
x & \text{if } x > 0,\, x \in \mathbb{Q} \\
2x & \text{if } x > 0,\, x \in \mathbb{Q}^c
\end{cases}
\quad
\tilde{h}^U(\mathit{x},\mathit{y}) = 
\begin{cases}
2x + 1 + |y| & \text{if } x < 0 \\
1 & \text{if } x = 0, y = 0 \\
x^2 + 2x + 1 & \text{if } x > 0,\, x \in \mathbb{Q} \\
x^2 + x + 1 & \text{if } x >0,\, x \in \mathbb{Q}^c
\end{cases}
    \end{aligned}
    \end{equation*}
  By Theorem 3.1. case (iii),~$\frac{\partial \tilde{h}}{\partial x}(0, 0)$ exist and $\frac{\partial \tilde{h}}{\partial x}(0, 0)=[1,2]$.\\
\end{example}
\begin{example}
     Let \( \tilde{h} : \mathbb{R}^2 \to \mathcal{I}(\mathbb{R}) \) be defined as:
\[
\tilde{h}(\mathit{x}, \mathit{y}) = 
\begin{cases}
[x, 2x^2 + 2x + 1+ |y|] & \text{if } x > 0,\, x \in \mathbb{Q}\\
[2x, 2x^2 +x+ 1 + |y|] & \text{if } x > 0\, x \in \mathbb{Q}^c \\
[0, 1] & \text{if } x = 0, y = 0 \\
[x, x^2 + 2x + 1] & \text{if } x < 0,\, x \in \mathbb{Q} \\
[2x, x^2 + x + 1] & \text{if } x < 0,\, x \in \mathbb{Q}^c
\end{cases}
\]

Then \( \tilde{h}^L, \tilde{h}^U : \mathbb{R}^2 \to \mathbb{R} \), be given as follows:

\begin{equation*}
    \begin{aligned}
        \tilde{h}^L(\mathit{x},\mathit{y}) = 
\begin{cases}
x & \text{if } x > 0,\, x \in \mathbb{Q} \\
2x & \text{if } x > 0,\, x \in \mathbb{Q}^c \\
0 & \text{if } x = 0,\, y = 0 \\
x & \text{if } x < 0,\, x \in \mathbb{Q} \\
2x & \text{if } x < 0,\, x \in \mathbb{Q}^c
\end{cases}
\quad
\tilde{h}^U(\mathit{x},\mathit{y}) = 
\begin{cases}
2x^2 +2x+ 1 + |y| & \text{if } x > 0,\,x \in \mathbb{Q}  \\
2x^2 +x+ 1 + |y| & \text{if } x > 0,\,x \in \mathbb{Q}^c  \\
1 & \text{if } x = 0, y = 0 \\
x^2 + 2x + 1 & \text{if } x < 0,\, x \in \mathbb{Q} \\
x^2 + x + 1 & \text{if } x < 0,\, x \in \mathbb{Q}^c
\end{cases}
    \end{aligned}
    \end{equation*}
By Theorem 3.1. case (iv),~$\frac{\partial \tilde{h}}{\partial x}(0, 0)$ exist and $\frac{\partial \tilde{h}}{\partial x}(0, 0)=[1,2]$.
\end{example}
\vspace{-0.76cm}
\begin{center}
\item  \subsection{\textit{gH-product of a vector with n-tuples of intervals}}   
\end{center}
Next, we introduce the gH-product of a vector with n-tuples of intervals. 
  Ghosh et. al.~\cite{Ghosh2020}, defined  the product \( \nu \cdot \tilde{\mathcal{K}} \) as follows,
\[
\nu \cdot \tilde{\mathcal{K}} = \sum_{i=1}^{n} \nu_i \mathcal{K}_i 
\]
where, $\nu=(\nu_1,\nu_2,.....,\nu_n)\in \mathbb{R}^n$ and $\tilde{\mathcal{K}}=(\mathcal{K}_1,\mathcal{K}_2,....,\mathcal{K}_n)\in\mathcal{I}^n(\mathbb{R})$.\\
In particular, for n=2, we have $\nu=(\nu_1,\nu_2)\in \mathbb{R}^2$ and the interval $\tilde{\mathcal{K}}=(\mathcal{K}_1,\mathcal{K}_2)\in\mathcal{I}^2(\mathbb{R})$.
Now using Ghosh~\cite{Ghosh2020} definition, we have \\
\[
\nu \cdot \tilde{\mathcal{K}} = \sum_{i=1}^{2} \nu_i \mathcal{K}_i =\nu_1 \mathcal{K}_1+\nu_2 \mathcal{K}_2
\]
For instance, let \( \nu = (1, -1) \), $\mathcal{K}_1=[1,2]$ and \( \tilde{\mathcal{K}} = (\mathcal{K}_1,\mathcal{K}_1) \). Then,
\begin{align*}
\nu \cdot \tilde{\mathcal{K}} &= 1 \cdot [1, 2] + (-1) \cdot [1, 2] \\
&= [1, 2] + [-2, -1] \\
&= [-1, 1]\\
&\neq [0,0].
\end{align*}
It is to be noted that the above expression by the definition of Ghosh~\cite{Ghosh2020} is equivalent to the Minkowski difference of two intervals i.e. $\nu.\tilde{\mathcal{K}}=\mathcal{K}_1-\mathcal{K}_1\neq0$.
To overcome Minkowski difference, the Hukuhara difference (H-difference) and later the gH-difference were introduced by \cite{Hukuhara,Stefanini2009}.\vspace{0.2cm}\\
Now, we define the gH-product of a vector with n-tuples of intervals as follows:\vspace{0.2cm}\\
Let  \( \nu = (\nu_1, \nu_2, \ldots, \nu_n) \in \mathbb{R}^n \) , \( \tilde{\mathcal{K}} = (\mathcal{K}_1, \mathcal{K}_2, \ldots, \mathcal{K}_n) \in \mathcal{I}^n(\mathbb{R}) \), 
$j^+ = \{ i : \nu_i \geq 0 \} $  and $j^- = \{ i : \nu_i < 0 \}.$\\
Now, $\bm{\langle ,  \rangle_{gH}: \mathbb{R}^n \times \mathcal{I}^n(\mathbb{R}) \longrightarrow \mathcal{I} (\mathbb{R})}$, be defined as: 
\begin{equation*} 
\begin{aligned}
\bm{\langle \nu}, \bm{\tilde{\mathcal{K}} \rangle_{\textbf{gH}}} &= \bm{\sum_{i \in j^+} \nu_i \mathcal{K}_i} \,\bm{\ominus_{gH}}\,\bm{ \sum_{k \in j^-} |\nu_k| \mathcal{K}_k} \\
&= \left[ \sum_{i \in j^+} \nu_i k_i^L , \sum_{i \in j^+} \nu_i k_i^U \right]
\,\ominus_{gH}\,
\left[ \sum_{k \in j^-} |\nu_k| k_k^L , \sum_{k \in j^-} |\nu_k| k_k^U \right] \\
&= \left[ \min \{ p, q \},\, \max \{ p, q \} \right], \\
&\text{where} \quad
p = \sum_{i \in j^+} \nu_i k_i^L - \sum_{k \in j^-} |\nu_k| k_k^L, \quad
q = \sum_{i \in j^+} \nu_i k_i^U - \sum_{k \in j^-} |\nu_k| k_k^U.
\end{aligned}
\end{equation*}

\textbf{Case 1:} \( p \leq q \)

\begin{equation*}
\begin{aligned}
\langle \nu, \tilde{\mathcal{K}} \rangle_{gH}
&= \left[
\sum_{i \in j^+} \nu_i k_i^L - \sum_{k \in j^-} |\nu_k| k_k^L,\,
\sum_{i \in j^+} \nu_i k_i^U - \sum_{k \in j^-} |\nu_k| k_k^U
\right] \\[-4pt]
&= [ \nu  \mathit{k}^L,\; \nu  \mathit{k}^U ],
\end{aligned}
\end{equation*}

where
\[
\mathit{k}^L = (k_1^L, k_2^L, \ldots, k_n^L), \quad
\mathit{k}^U = (k_1^U, k_2^U, \ldots, k_n^U).
\]

\textbf{Case 2:} \( p > q \)

\[
\langle \nu, \tilde{\mathcal{K}} \rangle_{gH} = [ \nu  \mathit{k}^U,\, \nu \mathit{k}^L ].
\]\vspace{0.2cm}\\
\textit{The value $\langle \nu, \tilde{\mathcal{K}} \rangle_{gH}$ is called gH-product of $\nu$ with $\tilde{\mathcal{K}}$.}
\vspace{0.2cm}\\ 
Hence, for n=2, the gH-product $\langle \nu , \tilde{\mathcal{K}} \rangle_{gH}$ is equivalent with the gH-difference and $\nu \cdot \tilde{\mathcal{K}}$ is equivalent with Minkowski difference.\\
\textbf{Note:} 
\begin{enumerate}
    \item If all the component of $\nu$ are positive then $\langle \nu, \tilde{\mathcal{K}} \rangle_{gH}$ is given as
\begin{equation*}
    \begin{aligned}
        \langle \nu, \tilde{\mathcal{K}} \rangle_{gH}=\sum_{i=1}^{n} \nu_i \mathcal{K}_i=\sum_{i=1}^{n} \nu_i \mathcal{K}_i\ominus_{gH}[0,0]
    \end{aligned}
\end{equation*}
    \item If all the component of $\nu$ are negative then $\langle \nu, \tilde{\mathcal{K}} \rangle_{gH}$ is given as
\begin{equation*}
    \begin{aligned}
        \langle \nu, \tilde{\mathcal{K}} \rangle_{gH}=\sum_{i=1}^{n} \nu_i \mathcal{K}_i=[0,0]\ominus_{gH}\sum_{i=1}^{n} |\nu_i| \mathcal{K}_i
    \end{aligned}
\end{equation*}

\end{enumerate}

\begin{remark}
    If each component of n-tuples of the interval  \( \tilde{\mathcal{K}} \in \mathcal{I}^n(\mathbb{R}) \) is  a \emph{degenerate interval} (i.e., \( \mathcal{K}_i = [k^i, k^i] \) for some \( k^i \in \mathbb{R} \), see Moore~\cite{Moore1966}),then $gH$ product coincides with the dot product.
\end{remark}
\begin{proposition}
  Let \( \nu = (\nu_1, \nu_2, \ldots, \nu_n),\omega=(\omega_1, \omega_2, \ldots, \omega_n) \in \mathbb{R}^n \), $\tilde{\lambda} \in \mathbb{R}$ \text{and} \( \tilde{\mathcal{K}} = (\mathcal{K}_1, \mathcal{K}_2, \ldots, \mathcal{K}_n) \in \mathcal{I}^n(\mathbb{R}) \). Then:\\
  \begin{itemize}
 \label{Prop 3.2.1}\item[(i)] $\langle -\nu, \tilde{\mathcal{K}} \rangle_{gH}=-\langle \nu, \tilde{\mathcal{K}} \rangle_{gH}$  \\
 \item[(ii)]  $\langle \tilde{\lambda}\nu, \tilde{\mathcal{K}} \rangle_{gH}=\tilde{\lambda}\langle \nu, \tilde{\mathcal{K}} \rangle_{gH}$\\
 \item[(iii)]  In general,$\langle \nu + \omega, \tilde{\mathcal{K}} \rangle_{gH} \neq \langle \nu, \tilde{\mathcal{K}} \rangle_{gH} + \langle \omega, \tilde{\mathcal{K}} \rangle_{gH}$\\
 \item[(iv)]  Suppose \( \nu \neq 0 \), then \( \langle \nu, \tilde{\mathcal{K}} \rangle_{gH} = 0 \Leftrightarrow \nu \perp \mathit{k}^L~ \&~~ \nu \perp \mathit{k}^U \)
  \end{itemize}
\end{proposition}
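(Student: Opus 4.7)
The plan is to first simplify the definition of $\langle \nu, \tilde{\mathcal{K}}\rangle_{gH}$ to a single closed form and then read off all four items from it. For $i\in j^+$ we have $\nu_i\geq 0$, while for $k\in j^-$ we have $|\nu_k|=-\nu_k$, so the quantities $p,q$ in the definition satisfy
\begin{equation*}
p=\sum_{i\in j^+}\nu_i k_i^L - \sum_{k\in j^-}|\nu_k|k_k^L = \sum_{i=1}^n \nu_i k_i^L = \nu\cdot k^L,
\end{equation*}
and likewise $q=\nu\cdot k^U$. Combining Case~1 and Case~2 of the definition yields the uniform representation
\begin{equation*}
\langle \nu, \tilde{\mathcal{K}}\rangle_{gH}=\bigl[\min\{\nu\cdot k^L,\,\nu\cdot k^U\},\,\max\{\nu\cdot k^L,\,\nu\cdot k^U\}\bigr],
\end{equation*}
which eliminates the piecewise structure and makes the rest of the arguments purely algebraic.

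For (i), I would substitute $-\nu$ into this closed form; both $\nu\cdot k^L$ and $\nu\cdot k^U$ are negated, so $\min$ and $\max$ swap and pick up a sign, giving $[-\max\{\cdot\},-\min\{\cdot\}]$, which by the definition of $-\mathcal{K}$ at the top of Section~2 is exactly $-\langle \nu,\tilde{\mathcal{K}}\rangle_{gH}$. For (ii), I would split on the sign of $\tilde{\lambda}$. For $\tilde{\lambda}\geq 0$ the quantities $\nu\cdot k^L$ and $\nu\cdot k^U$ scale by $\tilde{\lambda}$ without swapping $\min$/$\max$; for $\tilde{\lambda}<0$ they scale and swap, matching $\tilde{\lambda}\mathcal{K}=[\tilde{\lambda}k^U,\tilde{\lambda}k^L]$ from the scalar multiplication rule. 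Either case is precisely $\tilde{\lambda}\langle \nu,\tilde{\mathcal{K}}\rangle_{gH}$; alternatively, one can handle $\tilde{\lambda}<0$ by writing $\tilde{\lambda}\nu=-(|\tilde{\lambda}|\nu)$ and invoking (i) together with the $\tilde{\lambda}\geq 0$ case. Item (iv) is immediate from the closed form: with $\nu\neq 0$, the interval $[\min\{\nu\cdot k^L,\nu\cdot k^U\},\max\{\nu\cdot k^L,\nu\cdot k^U\}]$ equals $[0,0]$ if and only if both $\nu\cdot k^L=0$ and $\nu\cdot k^U=0$, i.e.\ $\nu\perp k^L$ and $\nu\perp k^U$.

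For (iii) I would just exhibit a counterexample, since the claim is only about failure in general. Take $n=1$, $\nu=1$, $\omega=-1$, $\tilde{\mathcal{K}}=([1,2])$. Then $\langle \nu+\omega,\tilde{\mathcal{K}}\rangle_{gH}=\langle 0,([1,2])\rangle_{gH}=[0,0]$, whereas $\langle \nu,\tilde{\mathcal{K}}\rangle_{gH}=[1,2]$ and $\langle \omega,\tilde{\mathcal{K}}\rangle_{gH}=[0,0]\ominus_{gH}[1,2]=[-2,-1]$, so their (Minkowski) sum is $[-1,1]\neq[0,0]$.

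The only delicate point I foresee is the sign bookkeeping in (ii) when $\tilde{\lambda}<0$, since one must verify that the formal cases ``$p\leq q$'' and ``$p>q$'' flip consistently with the change of $\min$/$\max$; but with the closed-form reduction already in hand, this becomes the single observation that scalar multiplication of an interval reverses endpoints exactly when the scalar is negative, which is built into the definition of $\tilde{\lambda}\mathcal{K}$ in Section~2.
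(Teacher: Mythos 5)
Your proposal is correct and follows essentially the same route as the paper: the closed form $\langle\nu,\tilde{\mathcal{K}}\rangle_{gH}=\bigl[\min\{\nu\cdot k^L,\nu\cdot k^U\},\max\{\nu\cdot k^L,\nu\cdot k^U\}\bigr]$ is exactly what the paper's Case~1/Case~2 of the definition already record, and your arguments for (i), (ii) and (iv) mirror the paper's (the paper proves (i) by an explicit two-case analysis on the signs of $p^*=-p$ and $q^*=-q$ where you use the $\min$/$\max$ negation identity in one line, and proves (ii) by the same $\tilde{\lambda}\ge 0$ versus $\tilde{\lambda}<0$ split invoking Proposition~\ref{prop2.1} and part~(i)). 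For (iii) you exhibit a different but equally valid counterexample ($n=1$, $\nu=1$, $\omega=-1$, $\mathcal{K}_1=[1,2]$) in place of the paper's two-dimensional one; both correctly establish the failure of additivity in the first argument.
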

\begin{proof}
\begin{itemize}
    \item[\textit{(i)}]  \mbox{By the definition of $gH$-product, we have}\vspace{0.2cm}
    \begin{align*}
    \langle \nu, \tilde{\mathcal{K}} \rangle_{gH} = &\sum_{i \in j^+} \nu_i \mathcal{K}_i \,\ominus_{gH}\, \sum_{k \in j^-} |\nu_k| \mathcal{K}_k \\
= &\left[ \sum_{i \in j^+} \nu_i k_i^L , \sum_{i \in j^+} \nu_i k_i^U \right]
\,\ominus_{gH}\,
\left[ \sum_{k \in j^-} |\nu_k| k_k^L , \sum_{k \in j^-} |\nu_k| k_k^U \right] \\
= &\left[ \min \{ p, q \},\, \max \{ p, q \} \right], 
\end{align*}
\text{where} \quad
$p = \sum_{i \in j^+} \nu_i k_i^L - \sum_{k \in j^-} |\nu_k| k_k^L$, \quad
$q = \sum_{i \in j^+} \nu_i k_i^U - \sum_{k \in j^-} |\nu_k| k_k^U.$
Now, we have 
  \begin{equation*}
      \begin{aligned}
          \langle -\nu, \tilde{\mathcal{K}} \rangle_{gH}&= \sum_{k \in j^-} |\nu_k| \mathcal{K}_k \,\ominus_{gH}\, \sum_{i \in j^+} \nu_i \mathcal{K}_i\\
          &=\left[ \sum_{k \in j^-} |\nu_k| k_k^L , \sum_{k \in j^-} |\nu_k| k_k^U \right] 
\,\ominus_{gH}\,\left[ \sum_{i \in j^+} \nu_i k_i^L , \sum_{i \in j^+} \nu_i k_i^U \right]
 \\
&= \left[ \min \{ p^*, q^* \},\, \max \{ p^*, q^* \} \right], \\
      \end{aligned}
  \end{equation*}  
  where,
 \begin{equation*}
\begin{aligned}
     p^* &= \sum_{k \in j^-} |\nu_k| k_k^L - \sum_{i \in j^+} \nu_i k_i^L=-p,\\
     q^* &= \sum_{k \in j^-} |\nu_k| k_k^U - \sum_{i \in j^+} \nu_i k_i^U=-q.
\end{aligned}
 \end{equation*}
 \begin{itemize}
     \item [Case (a):]  When $p^*\leq q^* ~i.e.-p\leq-q \Rightarrow q\leq p$
  \begin{equation*}
      \begin{aligned}
           \langle -\nu, \tilde{\mathcal{K}} \rangle_{gH}&=[\sum_{k \in j^-} |\nu_k| k_k^L - \sum_{i \in j^+} \nu_i k_i^L,\sum_{k \in j^-} |\nu_k| k_k^U - \sum_{i \in j^+} \nu_i k_i^U]\\
          & =[-p,-q]\\
          &=-[q,p]\\
          &=-\langle \nu, \tilde{\mathcal{K}} \rangle_{gH}
      \end{aligned}
  \end{equation*}
  
  \item [Case (b):] When $p^*> q^*~~ i.e. -p>-q\Rightarrow q>p$ 
  \begin{equation*}
      \begin{aligned}
           \langle -\nu, \tilde{\mathcal{K}} \rangle_{gH}&=[\sum_{k \in j^-} |\nu_k| k_k^U - \sum_{i \in j^+} \nu_i k_i^U,\sum_{k \in j^-} |\nu_k| k_k^L - \sum_{i \in j^+} \nu_i k_i^L]\\
          & =[-q,-p]\\
          &=-[p,q]\\
          &=-\langle \nu, \tilde{\mathcal{K}} \rangle_{gH}
      \end{aligned}
  \end{equation*}
  \end{itemize}
 
\noindent

\item[\textit{(ii)}] Let $\tilde{\lambda} \geq0$, \\
\begin{equation*}
    \begin{aligned}
      \langle \tilde{\lambda}\nu, \tilde{\mathcal{K}} \rangle_{gH} =& \sum_{i \in j^+} \tilde{\lambda}\nu_i \mathcal{K}_i \,\ominus_{gH}\, \sum_{k \in j^-} |\tilde{\lambda}\nu_k| \mathcal{K}_k \\
     =&\tilde{\lambda} \sum_{i \in j^+} \nu_i \mathcal{K}_i \,\ominus_{gH}\, \tilde{\lambda}\sum_{k \in j^-} |\nu_k| \mathcal{K}_k \\
     =& \tilde{\lambda}(\sum_{i \in j^+} \nu_i \mathcal{K}_i \,\ominus_{gH}\, \sum_{k \in j^-} |\nu_k| \mathcal{K}_k)\hspace{1cm} (\text{using Proposition \ref{prop2.1}}),\\
 =& \tilde{\lambda} \langle \nu, \tilde{\mathcal{K}} \rangle_{gH}  
    \end{aligned}\\
\end{equation*}
  Now, let $\tilde{\lambda}<0 \Rightarrow \tilde{\lambda}=-\mu$,where $ \mu\geq0$. 
  \begin{equation*}     
    \begin{aligned}
     \langle \tilde{\lambda}\nu, \tilde{\mathcal{K}} \rangle_{gH} =&  \langle -\mu\nu, \tilde{\mathcal{K}} \rangle_{gH}\\
     =& \mu \langle -\nu, \tilde{\mathcal{K}} \rangle_{gH}  \\
 =&  -\mu \langle \nu, \tilde{\mathcal{K}} \rangle_{gH}\hspace{3.0 cm}\text{(using Proposition \ref{Prop 3.2.1}.($\mathit{i}$))}, \\
 =& \tilde{\lambda} \langle \nu, \tilde{\mathcal{K}} \rangle_{gH} \\
    \end{aligned}
    \end{equation*}
Hence,$\langle \tilde{\lambda}\nu, \tilde{\mathcal{K}} \rangle_{gH}=\tilde{\lambda}\langle \nu, \tilde{\mathcal{K}} \rangle_{gH}$~~ for every $\tilde{\lambda}\in\mathbb{R}.$\\
\noindent
\item[\textit{(iii)}]
\textbf {Non-linearity of gH-Product in its first component}: In general, the linearity in its first component does not hold, i.e. 
    $$\langle \nu + \omega, \tilde{\mathcal{K}} \rangle_{gH} \neq \langle \nu, \tilde{\mathcal{K}} \rangle_{gH} + \langle \omega, \tilde{\mathcal{K}} \rangle_{gH}.$$\vspace{0.2cm}
This is demonstrated in the next example.

\begin{example}
    Let  
$
\nu = (1, -1), \omega = (-5, 4)$ and $\tilde{\mathcal{K}} = (\mathcal{K}_1, \mathcal{K}_2),  \\
\text{where}~ \mathcal{K}_1 = [1, 2],~ \mathcal{K}_2 = [3, 6]
$

Then,  
$
\nu + \omega = (-4, 3)
$

Now, compute:
\begin{align*}
\langle \nu + \omega, \tilde{\mathcal{K}} \rangle_{gH} &= \left[\min\{5, 10\}, \max\{5, 10\}\right] = [5, 10] \\
\langle \nu, \tilde{\mathcal{K}} \rangle_{gH} &= \left[\min\{-2, -4\}, \max\{-2, -4\}\right] = [-4, -2]\\
\langle \omega, \tilde{\mathcal{K}} \rangle_{gH} &= \left[\min\{7, 14\}, \max\{7, 14\}\right] = [7, 14]
\end{align*}
This demonstrates that:
$
\langle \nu + \omega, \tilde{\mathcal{K}} \rangle_{gH} \neq \langle \nu, \tilde{\mathcal{K}} \rangle_{gH} + \langle \omega, \tilde{\mathcal{K}} \rangle_{gH} $.
\end{example}\vspace{0.2cm}
However, the linearity can hold under the following assumptions.\\
The gH-product of \( \nu + \omega \) with \( \tilde{\mathcal{K}} \) is defined by:
\[
\langle \nu + \omega, \tilde{\mathcal{K}} \rangle_{gH} = \sum_{i \in j^+} (\nu_i + \omega_i) \mathcal{K}_i \ominus_{gH} \sum_{k \in j^-} |\nu_k + \omega_k| \mathcal{K}_k.
\]
Where,
\begin{align*}
p &= \sum_{i \in j^+} (\nu_i + \omega_i) k_i^L - \sum_{k \in j^-} |\nu_k + \omega_k| k_k^U \\
q &= \sum_{i \in j^+} (\nu_i + \omega_i) k_i^U - \sum_{k \in j^-} |\nu_k + \omega_k| k_k^L
\end{align*}

\noindent
Thus,
\begin{align*}    
\langle \nu + \omega, \tilde{\mathcal{K}} \rangle_{gH}& = \left[ \min\{p, q\},\max\{p, q\} \right]\\
&=\Big[ \min\left\{(\nu + \omega) \cdot \mathit{k}^L, (\nu + \omega) \cdot \mathit{k}^U \right\},\\
&\quad\qquad\max\left\{(\nu + \omega) \cdot \mathit{k}^L, (\nu + \omega) \cdot \mathit{k}^U \right\}\Big]
\end{align*}
 where, $\mathit{k}^L = (k_1^L, k_2^L, \dots, k_n^L)$,~$\mathit{k}^U =(k_1^U, k_2^U, \dots, k_n^U)$.\\
Now, if \( p < q \), then
$\langle \nu + \omega, A \rangle_{gH} = [p, q] = \left[(\nu + \omega) \cdot \mathit{k}^L, (\nu + \omega) \cdot \mathit{k}^U\right]$.\\
\vspace{0.2cm}
\noindent
If \( p > q \), then
$\langle \nu + \omega, \tilde{\mathcal{K}} \rangle_{gH} = [q, p] = \left[(\nu + \omega) \cdot \mathit{k}^U, (\nu + \omega) \cdot \mathit{k}^L\right]$\\
We also observe that,
\begin{align*}
\langle \nu, \tilde{\mathcal{K}} \rangle_{gH} + \langle w, \tilde{\mathcal{K}} \rangle_{gH} 
&= \left[ \min\{\nu \mathit{k}^L, \nu \mathit{k}^U\}, \max\{\nu \mathit{k}^L, \nu \mathit{k}^U\} \right] \\
&\quad + \left[ \min\{w \mathit{k}^L, w \mathit{k}^U\}, \max\{w \mathit{k}^L, w \mathit{k}^U\} \right]
\end{align*}
Consider the following  cases:\\
\begin{itemize}
\item [Case (a):] Suppose \( \nu \mathit{k}^L \leq \nu \mathit{k}^U \) and \( w \mathit{k}^L \leq w \mathit{k}^U\). Then,
\begin{align*}
\nu \mathit{k}^L + w \mathit{k}^L &\leq \nu \mathit{k}^U + w \mathit{k}^U \\
\Rightarrow (\nu + w) \mathit{k}^L &\leq (\nu + w) \mathit{k}^U
\end{align*}

Thus,
\begin{align*}
\langle \nu, \tilde{\mathcal{K}} \rangle_{gH} + \langle w, \tilde{\mathcal{K}} \rangle_{gH} 
&= [\nu \mathit{k}^L, \nu \mathit{k}^U] + [w \mathit{k}^L, w \mathit{k}^U] \\
&= [(\nu + w) \mathit{k}^L, (\nu + w) \mathit{k}^U] \\
&= \langle \nu + w, \tilde{\mathcal{K}} \rangle_{gH}
\end{align*}

\item [Case (b):] Analogously, when \( \nu \mathit{k}^L \geq \nu \mathit{k}^U \) and \( w \mathit{k}^L \geq w \mathit{k}^U \). Then,
\begin{align*}
\langle \nu, \tilde{\mathcal{K}} \rangle_{gH} + \langle w, \tilde{\mathcal{K}} \rangle_{gH} 
&= \langle \nu + w, \tilde{\mathcal{K}} \rangle_{gH}
\end{align*}
\end{itemize}

\vspace{0.2cm}
\noindent
Therefore, the \textbf{linearity of the gH-product} in its first component holds under above conditions.\\

\item[\textit{(iv)}]  Suppose \( \nu \neq 0 \) and \( \langle \nu, \tilde{\mathcal{K}} \rangle_{gH} = 0 \).  \vspace{0.2cm}\\
From the definition of the gH-product, we have
$$ \langle \nu, \tilde{\mathcal{K}} \rangle_{gH}=\left[ \min\{\nu \mathit{k}^L, \nu \mathit{k}^U\}, \max\{\nu \mathit{k}^L, \nu \mathit{k}^U\} \right] = [0, 0]$$
This holds if and only if $ \min\{\nu \mathit{k}^L, \nu \mathit{k}^U\} = 0$ ~~ \text{and} ~ $\max\{\nu \mathit{k}^L, \nu \mathit{k}^U\} = 0 $ which in turns hold  if and only if
$\nu \mathit{k}^L = 0$  \text{and} $ \nu \mathit{k}^U = 0.$\vspace{0.2cm}\\
We conclude that if  $\nu \neq 0$ , then  $\langle \nu, \tilde{\mathcal{K}} \rangle_{gH} = 0  \Leftrightarrow \nu \mathit{k}^L = 0 $ \text{and}  $\nu \mathit{k}^U = 0.$\vspace{0.2cm}\\
That is, $\nu \perp \mathit{k}^L$ ~ \text{and} ~ $\nu \perp \mathit{k}^U.$
\end{itemize}
\end{proof}

\vspace{0.2cm}
\noindent
The following example illutrates the linearity of the $gH$-product under the assumptions of \text{Proposition \ref{Prop 3.2.1}\textit{(iii)}}.
\begin{example}
    Let  
$
\nu = (1, -1), \omega = (-5, 4)
$
and  
$\tilde{\mathcal{K}} = (\mathcal{K}_1, \mathcal{K}_2), ~~ \text{where} ~~\mathcal{K}_1 = [1, 2],  \mathcal{K}_2 = [3, 4]$
Then,  
$\nu + \omega = (-4, 3)$\\
Now, compute the gH-products:
\begin{align}
\langle \nu + \omega, \tilde{\mathcal{K}} \rangle_{gH} &= \left[\min\{5, 4\}, \max\{5, 4\}\right] = [4, 5] \\
\langle \nu, \tilde{\mathcal{K}} \rangle_{gH} &= \left[\min\{-2, -2\}, \max\{-2, -2\}\right] = [-2, -2] \\
\langle \omega, \tilde{\mathcal{K}} \rangle_{gH} &= \left[\min\{7, 6\}, \max\{7, 6\}\right] = [6, 7]
\end{align}

Adding (3.2) and (3.3),we get
\[
\langle \nu, \tilde{\mathcal{K}} \rangle_{gH} + \langle \omega, \tilde{\mathcal{K}} \rangle_{gH} = [-2, -2] + [6, 7] = [4, 5]
\]

Hence,
$
\langle \nu + \omega, \tilde{\mathcal{K}} \rangle_{gH} = \langle \nu, \tilde{\mathcal{K}} \rangle_{gH} + \langle \omega, \tilde{\mathcal{K}} \rangle_{gH}
$
\end{example}
The following example illutrates the \text{Proposition \ref{Prop 3.2.1}\textit{(iv)}}.
\begin{example}
     Let  $v = \left(1, -2\right)$ and  $ \tilde{\mathcal{K}} = (\mathcal{K}_1, \mathcal{K}_2), \quad \text{where} \quad \mathcal{K}_1 = \left[3, 5\right],\\
     \quad  \mathcal{K}_2 = \left[\frac{3}{2}, \frac{5}{2}\right]$. So,\vspace{0.2cm}\\
  $v\mathit{k}^L=1\cdot3+(-2)\cdot\frac{3}{2}=0$  and  $v\mathit{k}^U=1\cdot5+(-2)\cdot\frac{5}{2}=0$\vspace{0.2cm}\\
This implies,  $
 \quad \langle v, \tilde{\mathcal{K}} \rangle_{gH} = 0 $  even if $ v \neq 0.
$\vspace{0.2cm}\\
Or, using definition of $gH$-product, we have
\begin{equation*}
    \begin{aligned}
        \langle v, \tilde{\mathcal{K}} \rangle_{gH}&=\mathcal{K}_1 \ominus_{gH}2\mathcal{K}_2\\
        &=\left[3, 5\right]\ominus_{gH}2\left[\frac{3}{2}, \frac{5}{2}\right]\\
        &=[0,0]
    \end{aligned}
\end{equation*}
\end{example}

\vspace{.2cm}   
\section{\textbf{CONCLUSION}}

In this paper, we have developed and analyzed a framework for computing the \emph{gH-gradient} of IVFs using the concept of \emph{gH differentiability}. Building upon foundational ideas from interval analysis, we first revisited the limitations of classical interval subtraction and highlighted the necessity of the \emph{gH-difference}. This framework allowed us to define consistent and well-behaved notions of \emph{gH-partial derivatives}, which act as the fundamental components of the \emph{gH-gradient} of IVFs.

We introduced the \emph{gH-product} of a vector with n-tuples of intervals. This operation plays a crucial role in analyzing optimality in uncertain or imprecise environments. Through multiple non-trivial examples, we demonstrated the correctness and applicability of the proposed  results.

The theoretical contributions of this work are significant in extending classical differential tools to interval-valued frameworks, thereby facilitating future developments in interval-valued optimization, uncertainty modeling, and interval-based variational analysis. Furthermore, the proposed \emph{gH-product} lays the groundwork for future extensions in higher-dimensional vector spaces and for applications in manifold-based optimization involving interval data.

Future research may focus on:
\begin{itemize}
    \item Extending this framework to higher-order derivatives and Hessians for IVFs.
    \item Applying the \emph{gH-gradient} framework in optimization algorithms involving interval-valued objective functions or constraints.
    \item Exploring the interaction between \emph{gH-differentiability} and \emph{generalized convexity} concepts such as E-invexity and geodesic convexity.
    \item Implementing numerical methods and algorithms to compute these derivatives for complex real-world problems involving uncertainty.
\end{itemize}

Since a \emph{fuzzy number} is canonically represented by a family of intervals (its $\alpha$-cuts), this characterization extends naturally and rigorously to \emph{fuzzy-valued functions}.

\vspace{.2cm}
\noindent
\textbf{Competing Interests:}
It is declare that all the authors have no conflict of interest and did not receive support
from any organization for the submitted work.

\vspace{.2cm}
\noindent
\textbf{Data Availability Statement:}
The study is purely theoretical. All illustrative examples are original and fully described in the manuscript; hence, no external data were used.

      \end{document}